\numberwithin{equation}{section}
\newtheorem{theorem}{Theorem}[section]
\newtheorem{lemma}[theorem]{Lemma}
\newtheorem{cor}[theorem]{Corollary}
\newdefinition{definition}[theorem]{Definition}
\newdefinition{remark}[theorem]{Remark}
\newdefinition{example}[theorem]{Example}
\newproof{proof}{Proof}
\newcommand{\wconv}{\rightharpoonup}
\newcommand{\wsconv}{\mathrel{\vbox{\offinterlineskip\ialign{\hfil##\hfil\cr $\hspace{-0.4ex}\textnormal{\scriptsize{*}}$\cr \noalign{\kern-0.6ex}  $\rightharpoonup$\cr}}}}
\newcommand*\diff{\mathop{}\!\mathrm{d}}
\begin{document}
\begin{frontmatter}
\title{On a multivalued differential equation\\ with nonlocality in time\footnote{This work has been supported by Deutsche Forschungsgemeinschaft through Collaborative Research Center 910
``Control of self-organizing nonlinear systems: Theoretical methods and concepts of application''.
}}
\author{Andr\'e Eikmeier}
\ead{eikmeier@math.tu-berlin.de}
\author{Etienne Emmrich\corref{ee}}
\cortext[ee]{Corresponding author}
\ead{emmrich@math.tu-berlin.de}
\address{Technische Universit{\"a}t Berlin, Institut f\"ur Mathematik, Stra{\ss}e des 17.~Juni 136, 10623 Berlin, Germany}

\begin{abstract}
The initial value problem for a multivalued differential equation is studied, which is governed by the sum of a monotone, hemicontinuous, coercive operator fulfilling a certain growth condition and a Volterra integral operator in time of convolution type with exponential decay. The two operators act on different Banach spaces where one is not embedded in the other. The set-valued right-hand side is measurable and satisfies certain continuity and growth conditions. Existence of a solution is shown via a generalisation of the Kakutani fixed-point theorem.
\end{abstract}
\begin{keyword}
Nonlinear evolution equation \sep multivalued differential equation \sep differential inclusion \sep monotone operator \sep Volterra operator \sep exponentially decaying memory \sep existence \sep Kakutani fixed-point theorem
\MSC[2010]{47J35, 34G25, 45K05, 35R70}
\end{keyword}
\end{frontmatter}
\section{Introduction} \label{introduction}
\subsection{Problem statement and main result}

\noindent We consider the multivalued differential equation\footnote{Also named differential inclusion by many authors. However, in order to distinguish this kind of problem from the ones containing subdifferentials or set-valued (maximal monotone) operators, we chose the name multivalued differential equation.}
\begin{equation} \label{problem}
  \begin{split}
    v'(t) + Av(t) +(BKv)(t) &\in F(t,v(t)), \quad t\in(0,T),\\
    v(0)&=v_0,
  \end{split}
\end{equation}
where
\begin{equation} \label{defK}
  (Kv)(t) = u_0+\int_0^t k(t-s) v(s) \diff s, \quad k(z)=\lambda e^{-\lambda z}.
\end{equation}
Here, $T>0$ defines the considered time interval, $\lambda>0$ is a given parameter and $v_0$, $u_0$ are the given initial data of the problem.

The operator $A\colon V_A\to V_A^*$ is a monotone, hemicontinuous, coercive operator satisfying a certain growth condition, where $V_A$ is a real, reflexive Banach space. The operator $B\colon V_B\to V_B^*$ is linear, bounded, strongly positive, and symmetric, where $V_B$ denotes a real Hilbert space.
The space $V_A$ shall be compactly and densely embedded in a real Hilbert space $H$, whereas $V_B$ shall be only continuously and densely embedded in $H$. The dual of $H$ is identified with $H$ itself, such that both $V_A$, $H$, $V_A^*$ and $V_B$, $H$, $V_B^*$ form a so-called Gelfand triple. However, we do not assume any relation between $V_A$ and $V_B$ apart from $V=V_A\cap V_B$ being separable and densely embedded in both $V_A$ and $V_B$. We do not assume that $V_A$ is embedded into $V_B$ or the other way around. Overall, we have the scale
\begin{equation}\label{embed}
V_A\cap V_B=V\subset V_C \subset H = H' \subset V_C'\subset V'= V_A'+V_B',\quad C\in \{A,B\},
\end{equation}
of Banach and Hilbert spaces, where all embeddings are meant to be continuous and dense and the embedding $V_A\subset H$ is even meant to be compact.

The operator $F\colon [0,T]\times H\to P_{fc}(H)$ is graph measurable, fulfils a certain growth condition in the second argument and the graph of $v\mapsto F(t,v)$ is sequentially closed in $H\times H_w$ for almost all $t\in(0,T)$, where $H_w$ denotes the Hilbert space $H$ equipped with the weak topology. The set $P_{fc}(H)$ denotes the set of all nonempty, closed, and convex subsets of $H$.
 
Multivalued differential equations appear, e.g., in the formulation of optimal feedback control problems. If we consider the inclusion as an equation with a side condition on the right-hand side, i.e.,
\begin{equation*}
  \begin{split}
    v'(t) + Av(t) +(BKv)(t) &=f(t), \phantom{F(t,v(t))} \hspace{-0.3cm} t\in(0,T), \\
    f(t)&\in F(t,v(t)), \phantom{f(t)}\hspace{-0.3cm} t\in(0,T),\\
    v(0)&=v_0,
  \end{split}
\end{equation*}
we can consider $f$ as the control of our system with the corresponding state $v$ and $F$ as the set of admissible controls, which, in the case of $F$ depending on $v$, leads to a feedback control system.

Physical applications of the system we are considering in this work are, e.g., heat flow in materials with memory (see, e.g., MacCamy~\cite{MacCamy77a}, Miller~\cite{Miller78}) or viscoelastic fluid flow (see, e.g., Desch, Grimmer, and Schappacher~\cite{Desch88}, MacCamy~\cite{MacCamy77b}). Another application related to that are non-Fickian diffusion models which describe diffusion processes of a penetrant through a viscoelastic material (see, e.g., Edwards~\cite{Edwards96}, Edwards and Cohen~\cite{EdwardsCohen95}, Shaw and Whiteman~\cite{ShawWhiteman98}). They also appear, e.g., in mathematical biology (see, e.g., Cushing~\cite{Cushing}, Fedotov and Iomin~\cite{FedotovIomin08}, Mehrabian and Abousleiman~\cite{MehrabianAbousleiman11}).

Due to the specific form of the kernel $k$ given in \eqref{defK}, we can rewrite our system into the coupled system
\begin{equation} \label{coupled}
  \begin{split}
    v'(t) + Av(t) + Cu(t) &\in F(t,v(t)) , \phantom{=\lambda Dv(t)}\hspace{-1.0cm} t\in(0,T),\\
    (u-Du_0)'(t) + \lambda (u-Du_0)(t)  &=\lambda Dv(t), \phantom{\in F(t,v(t))}\hspace{-1.0cm} t\in(0,T),\\
    v(0)&=v_0, \\
    u(0)&=Du_0,
  \end{split}
\end{equation}
where $C$ and $D$ are suitably chosen linear operators such that $B=CD$.

Instead of the kernel $k(z)=\lambda e^{-\lambda z}$, we might also consider $k(z)=c e^{-\lambda z}$ with $c,\lambda>0$. However, for simplicity, we will stick to the first type of kernel. Actually, this type appears naturally in many applications. In these applications, $\frac{1}{\lambda}$ is often describing a relaxation or averaged delay time. If we consider the limit $\lambda\to 0$, the system \eqref{coupled} decouples such that $u(t)=Du_0$, $t\in [0,T]$, is the solution of the second equation. In the case $\lambda\to \infty$, the system reduces to a single first-order equation for $v$ without memory. 

In the case of the kernel $k(z)=c e^{-\lambda z}$, the behaviour for $\lambda\to 0$ is slightly different. The limit then yields a second-order in time equation for $u$ (see, e.g., Emmrich and Thalhammer~\cite{EmmrichThalhammer1}).
 
\subsection{Literature overview}

This work is a continuation of Eikmeier, Emmrich, and Kreusler~\cite{EikEmmKre}. There, the single-valued instead of the multivalued differential equation is considered in the same setting concerning the spaces $V_A$ and $V_B$. However, due to the structure of the proof in the present work, we additionally need the compact embedding $V_A\subset H$ and we have to assume that the right-hand side is pointwisely $H$-valued.

Nonlinear integro-differential equations have been considered by many authors through the years. Results on well-posedness for more general classes of nonlinear evolution equations including Volterra operators, but only in the case of Hilbert spaces $V_A=V_B$, can be found in, e.g., Gajewski, Gr\"oger, and Zacharias~\cite{GGZ}. In contrast to this, Crandall, Londen, and Nohel~\cite{CrandallLondenNohel} study the case of a doubly nonlinear problem, where both nonlinear operators are assumed to be (possibly multivalued) maximal monotone subdifferential operators and the domain of definition of one of them has to be continuously and densely embedded in the domain of definition of the other one. For more references on nonlinear and also linear evolutionary integro-differential equations see Eikmeier, Emmrich, and Kreusler~\cite[Section 1.2]{EikEmmKre}.

Multivalued differential equations have also been studied by various authors. Basic results, also for set-valued analysis, can be found in, e.g., Aubin and Cellina~\cite{AubinCellina}, Aubin and Frankowska~\cite{AubinFrankowska}, or Deimling~\cite{Deimling}. In O'Regan~\cite{ORegan}, some extensions of the results shown in Deimling~\cite{Deimling} are presented. A semilinear multivalued differential equation with a linear, bounded, and strongly positive operator and a set-valued nonlinear operator is, e.g., considered in Beyn, Emmrich, and Rieger~\cite{BeynEmmRie}.

In particular, integro-differential equations in the multivalued case have been studied by, e.g., Papageorgiou~\cite{Papageorgiou88b,Papageorgiou88a,Papageorgiou91,Papageorgiou91b}. The equations are considered under different assumptions with the set-valued operator appearing in the integral term. In most of the works mentioned, examples of applications in the theory of optimal control are given. 


The optimal feedback control of a motion of a viscoelastic fluid via a multivalued differential equation is, e.g., considered in Gori et al.~\cite{Gori_etal} and Obukhovski\u{\i}, Zecca, and Zvyagin~\cite{ObZecZvy}. Existence of solutions for the equation are shown via topological degree theory.

\subsection{Organisation of the paper}

\noindent
The paper is organised as follows: In Section~\ref{sec_notation}, we introduce the general notation and some basic results from set-valued analysis. In Section~\ref{Assumptions}, we state our assumptions on the operators $A$, $B$, and $F$ and some preliminary results concerning properties we need in the following Section~\ref{existence}, where we prove existence of a solution to problem \ref{problem}. This is done via a generalisation of the Kakutani fixed-point theorem.

\section{Notation} \label{sec_notation}

\noindent Let $X$ be a Banach space with its dual $X^*$. The norm in $X$ and the standard norm in $X^*$ are denoted by $\Vert \cdot\Vert_X$ and $\Vert \cdot\Vert_{X^*}$, respectively. The duality pairing between $X$ and $X^*$ is denoted by $\langle\cdot,\cdot\rangle$. If $X$ is a Hilbert space, the inner product in $X$ is denoted by $(\cdot,\cdot)$. For the intersection $X\cap Y$ of two Banach spaces $X$ and $Y$, we consider the norm $\Vert \cdot\Vert_{X\cap Y}= \Vert \cdot\Vert_X+\Vert \cdot\Vert_Y$, and for the sum $X+Y$, we consider the norm 
\begin{equation*}
  \Vert z\Vert_{X+Y} = \inf \left\{ \max \left( \Vert z_X\Vert_X,\ \Vert z_Y\Vert_Y\right) \mid z=z_X+z_Y\ \text{with}\ z_X\in X,\ z_Y\in Y\right\}\!.
\end{equation*}
Note that $(X\cap Y)^*=X^*+Y^*$ if $X$ and $Y$ are embedded in a locally convex space and $X\cap Y$ is dense in $X$ and $Y$ with respect to the norm above, see, e.g., Gajewski~et~al.~\cite[pp.~12ff.]{GGZ}.

Now, let $X$ be a real, reflexive, and separable Banach space and $1\leq p\leq \infty$. By $L^p(0,T;X)$, we denote the usual space of Bochner measurable (sometimes also called strongly measurable), $p$-integrable functions equipped with the standard norm. For $1\leq p<\infty$, the duality pairing between $L^p(0,T;X)$ and its dual space $L^q(0,T;X^*)$, where $\frac{1}{p}+\frac{1}{q}=1$ for $p>1$ and $q=\infty$ for $p=1$, is also denoted by $\langle \cdot,\cdot\rangle$, and it is given by
\begin{equation*}
  \langle g,f\rangle = \int_0^T \langle g(t),f(t)\rangle\diff t,
\end{equation*}
see, e.g., Diestel and Uhl~\cite[Theorem 1 on p. 98, Corollary 13 on p. 76, Theorem 1 on p. 79]{DiestelUhl}. 

By $W^{1,p}(0,T;X)$, $1\leq p\leq \infty$, we denote the usual space of weakly differentiable functions $u\in L^p(0,T;X)$ with $u'\in L^p(0,T;X)$, equipped with the standard norm. By $\mathscr{C}([0,T];X)$, we denote the space of functions that are continuous on $[0,T]$ with values in $X$, whereas $\mathscr{C}_w([0,T];X)$ denotes the space of functions that are continuous on $[0,T]$ with respect to the weak topology in $X$. We have the continuous embedding $W^{1,1}(0,T;X) \subset \mathscr{C}([0,T];X)$, see, e.g., Roub\'i\v{c}ek~\cite[Lemma 7.1]{Roubicek}. Furthermore, a function $u\in W^{1,1}(0,T;X)$ is almost everywhere equal to a function that is absolutely continuous on $[0,T]$ with values in $X$, see, e.g., Br\'ezis~\cite[Theorem 8.2]{Brezis}. We denote the set of all these functions by $\mathscr{AC}([0,T];X)$. By $\mathscr{C}^1([0,T])$, we denote the space of on $[0,T]$ continuously differentiable real-valued functions. By $c$, we denote a generic positive constant.

Now, let us recall some definitions from set-valued analysis. Let $(\Omega, \Sigma)$ be a measurable space and let $X$ be a complete separable metric space. By $\mathcal{L}([a,b])$ and $\mathcal{B}(X)$, we denote the Lebesgue $\sigma$-algebra on the interval $[a,b]\subset \mathbb{R}$ and the Borel $\sigma$-algebra on $X$, respectively. By $P_{f}(X)$, we denote the set of all nonempty and closed subsets $U\subset X$, and by $P_{fc}(X)$, we denote the set of all nonempty, closed, and convex subsets $U\subset X$.

For a set-valued function $F\colon \Omega\to 2^X\setminus \{\emptyset\}$, let
\begin{equation*}
  \vert F(\omega)\vert:=\sup \left\{\Vert x\Vert_X\mid x\in F(\omega)\right\},\quad \omega\in \Omega.
\end{equation*}
A function $F\colon \Omega\to P_f(X)$ is called measurable (sometimes also called weakly measurable) if the preimage of each open set is measurable, i.e.,
\begin{equation*}
  F^{-1}(U):=\left\lbrace \omega\in \Omega\mid F(\omega)\cap U \neq \emptyset\right\rbrace \in \Sigma
\end{equation*}
for every open $U\subset X$.\footnote{Depending on the assumptions on $(\Omega,\Sigma)$ and $X$, there are many equivalent definitions of measurability for set-valued functions, see, e.g., Denkowski, Mig\'orski, and Papageorgiou~\cite[Theorem 4.3.4]{DenMigPapa}.} A function $f\colon \Omega\to X$ is called measurable selection of $F$ if $f(\omega)\in F(\omega)$ for all $\omega\in \Omega$ and $f$ is measurable. Each measurable set-valued function has a measurable selection, see, e.g., Aubin and Frankowska~\cite[Theorem 8.1.3]{AubinFrankowska}.

Now, let $(\Omega, \Sigma, \mu)$ be a complete $\sigma$-finite measure space and let $X$ be a separable Banach space. For a set-valued function $F\colon \Omega \to P_{f}(X)$ and $p\in [1,\infty)$, we denote by $\mathcal{F}^p$ the set of all $p$-integrable selections of $F$, i.e.,
\begin{equation*}
  \mathcal{F}^p:=\left\{ f\in L^p(\Omega;X,\mu) \mid f(\omega)\in F(\omega)\ \text{a.e. in}\ \Omega\right\}\!,
\end{equation*}
where $L^p(\Omega;X,\mu)$ denotes the space of Bochner measurable, $p$-integrable functions with respect to $\mu$.\footnote{Note that in the case of a separable Banach space $X$, the Bochner measurability of $f$ coincides with the $\Sigma$-$\mathcal{B}(X)$-measurability, see, e.g., Amann and Escher~\cite[Chapter X, Theorem 1.4]{AmannEscher}, Denkowski, Mig\'orski, and Papageorgiou~\cite[Corollary~3.10.5]{DenMigPapa}, or Papageorgiou and Winkert~\cite[Theorem 4.2.4]{PapaWin}} If $F$ is integrably bounded, i.e., there exists a nonnegative function $m\in L^p(\Omega;\mathbb{R},\mu)$ such that $F(\omega)\subset m(\omega)\;\! B_X$ for $\mu$-almost all $\omega\in \Omega$, where $B_X$ denotes the unit ball in $X$, each measurable selection of $F$ is in $\mathcal{F}^p$ due to Lebesgue's theorem on dominated convergence. The integral of $F$ is defined as 
\begin{equation*}
  \int_\Omega F \diff \mu := \left\{ \int_\Omega f \diff \mu \mid f\in \mathcal{F}^1\right\}\!.
\end{equation*}
For properties of this integral, see, e.g., Aubin and Frankowska~\cite[Chapter 8.6]{AubinFrankowska}.

For a set-valued function $F\colon \Omega \times X \to P_f(X)$, a function $v\colon \Omega \to X$ and $p\in [1,\infty)$, we denote by $\mathcal{F}^p(v)$ the set of all $p$-integrable selections of the mapping $\omega \mapsto F(\omega, v(\omega))$, i.e.,
\begin{equation*}
  \mathcal{F}^p(v):=\left\{ f\in L^p(\Omega;X,\mu) \mid f(\omega)\in F(\omega, v(\omega))\ \text{a.e. in}\ \Omega\right\}\!.
\end{equation*}

Finally, let $X$, $Y$ be Banach spaces and $\Omega\subset Y$. A set-valued function $F\colon \Omega\to 2^X\setminus \{\emptyset\}$ is called upper semicontinuous if $F^{-1}(U)$ is closed in $\Omega$ for all closed $U\subset X$. 

\section{Main assumptions and preliminary results}\label{Assumptions}

\noindent Throughout this paper, let $V_A$ be a real, reflexive Banach space and let $V_B$ and $H$ be real Hilbert spaces, respectively. As mentioned in Section \ref{introduction}, we require that $V=V_A\cap V_B$ is separable and the embeddings stated in \eqref{embed} are fulfilled (with the embedding $V_A\subset H$ meant to be compact). Let also $2\leq p <\infty$, $1<q\leq 2$ with $\frac{1}{p}+\frac{1}{q}=1$.

For $A\colon V_A \to V_A^*$, we say the assumptions \textbf{(A)} are fulfilled if 
\begin{itemize}
  \item[i)] $A$ is monotone,
  \item[ii)] $A$ is hemicontinuous, i.e., $\theta\mapsto \langle A(u+\theta v, w\rangle$ is continuous on $[0,1]$ for all $u,v,w\in V_A$,
  \item[iii)] $A$ fulfils a growth condition of order $p-1$, i.e., there exists $\beta_A>0$ such that \[\Vert Av\Vert_{V_A^*} \leq \beta_A \;\!(1+\Vert v\Vert_{V_A}^{p-1})\] for all $v\in V_A$,
  \item[iv)] $A$ is $p$-coercive, i.e., there exist $\mu_A>0$, $c_A\geq 0$ such that \[\langle Av,v\rangle \geq \mu_A \;\!\Vert v\Vert^p-c_A\] for all $v\in V_A$.
\end{itemize}
One operator satisfying these assumptions is, e.g., the $p$-Laplacian $-\Delta_p=-\nabla\cdot(\vert\nabla\vert^{p-2}\nabla)$ acting between the standard Sobolev spaces $W_0^{1,p}(\Omega)$ and $W^{-1,p}(\Omega)$ for a bounded Lipschitz domain $\Omega$, see, e.g., Zeidler~\cite[p.~489]{ZeidlerIIB}.
For $B\colon V_B\to V_B^*$, we say the assumptions \textbf{(B)} are fulfilled if
\begin{itemize}
  \item[i)] $B$ is linear,
  \item[ii)] $B$ is bounded, i.e., there exists $\beta_B>0$ such that \[\Vert Bv\Vert_* \leq \beta_B \;\! \Vert v\Vert\] for all $v\in V_B$,
  \item[iii)] $B$ is strongly positive, i.e., there exists $\mu_B>0$ such that \[\langle Bv,v\rangle \geq \mu_B \;\!\Vert v\Vert^2\] for all $v\in V_B$,
  \item[iv)] $B$ is symmetric.
\end{itemize}
Following these assumptions, $B$ induces a norm $\Vert\cdot\Vert_B:=\langle B\cdot, \cdot\rangle^{1/2}$ in $V_B$ that is equivalent to $\Vert \cdot\Vert_{V_B}$. Therefore, we denote the space $L^2(0,T;(V_B,\Vert\cdot\Vert_B))$ by $L^2(0,T;B)$. An example for an operator satisfying these assumptions is the Laplacian $-\Delta$ acting between the standard Sobolev spaces $H_0^{1}(\Omega)$ and $H^{-1}(\Omega)$, again for a bounded Lipschitz domain $\Omega$, as well as the fractional Laplacian $(-\Delta)^s$ for $\frac{1}{2}<s<1$, acting between the standard Sobolev--Slobodecki\u{\i} spaces $H_0^s(\Omega)$ and $H^{-s}(\Omega)$.

Finally, we say that $F\colon [0,T]\times H \to P_{fc}(H)$ fulfils the assumptions \textbf{(F)} if
\begin{itemize}
  \item[i)] $F$ is measurable,
  \item[ii)] for almost all $t\in(0,T)$, the graph of the mapping $v\mapsto F(t,v)$ is sequentially closed in $H\times H_w$, where $H_w$ denotes the Hilbert space $H$ equipped with the weak topology,
  \item[iii)] $\vert F(t,v)\vert \leq a(t) + b\;\!\Vert v\Vert_H^{2/q}$ a.e. with $a\in L^q(0,T)$, $a(t)\geq 0$ a.e. and $b>0$.
\end{itemize}
    

Note that it is also possible to consider $A\colon [0,T]\times V_A\to V_A^*$ and $B\colon [0,T]\times V_B\to V_B^*$, where the mappings $t\mapsto A(t,v)$, $v\in V_A$, and $t\mapsto B(t,v)$, $v\in V_B$, are assumed to be measurable and all the assumptions above are assumed to hold uniformly in $t$. However, for simplicity, we will only consider the case of autonomous operators $A$ and $B$.

These operators can be extended to operators defined on $L^p(0,T;V_A)$ and $L^1(0,T;V_B)$, respectively. The monotonicity and hemicontinuity of $A\colon V_A\to V_A^*$ imply demicontinuity (see, e.g., Zeidler~\cite[Propos.~26.4 on
p.~555]{ZeidlerIIB}). Due to the separability of $V_A^*$, the theorem of Pettis (see, e.g., Diestel and Uhl~\cite[Thm.~2 on p.~42]{DiestelUhl}) then implies that $A$ maps Bochner measurable functions $v\colon [0,T]\to V_A$ into Bochner measurable functions $Av\colon [0,T]\to V_A^*$, where $(Av)(t)=Av(t)$ for almost all $t\in (0,T)$. Due to the growth condition, we have the estimate
\begin{equation} \label{estimateA}
  \Vert Av\Vert_{L^q(0,T;V_A^*)}\leq c (1+ \Vert v\Vert^{p-1}_{L^p(0,T;V_A)})
\end{equation}
for all $v\in L^p(0,T;V_A)$, i.e., $A$ maps $L^p(0,T;V_A)$ into $L^q(0,T;V_A^*)$.

Via the same definition $(Bv)(t)=Bv(t)$ for a function $v\colon [0,T]\to V_B$, we can extend the operator \mbox{$B\colon V_B\to V_B^*$} to a linear, bounded, strongly positive, and symmetric operator mapping $L^2(0,T;V_B)$ into its dual or to a linear, bounded operator mapping $L^r(0,T;V_B)$ into $L^r(0,T;V_B^*)$, $1\leq r \leq \infty$, respectively.

Due to the definition \eqref{defK} of the operator $K$, we have the following lemma.

\begin{lemma} \label{estimateK}
  Let $X$ be an arbitrary Banach space, $k(z)=\lambda e^{-\lambda z}$, $\lambda>0$, $u_0\in X$. The operator $K\colon L^2(0,T;X)\to L^2(0,T;X)$ is well-defined, affine-linear, and bounded. The estimate  
  \begin{equation*}
    \|Kv-u_0\|_{L^2(0,T;X)} \le \|k\|_{L^1(0,T)} \|v\|_{L^2(0,T;X)}
  \end{equation*}
  is satisfied for all $v \in L^2(0,T;X)$, where $\|k\|_{L^1(0,T)}= 1-e^{-\lambda T}$. Further, the estimate
  \begin{equation*}
    \|Kv-u_0\|_{\mathscr{C}([0,T];X)} \le \lambda\, \|v\|_{L^1(0,T;X)}
  \end{equation*}
  is satisfied for all $v \in L^1(0,T;X)$, i.e., $K$ is also an affine-linear, bounded operator of $L^1(0,T;X)$ into $\mathscr{C}([0,T];X)$ (even  $\mathscr{AC}([0,T];X)$).
\end{lemma}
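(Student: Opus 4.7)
The plan is to treat $Kv-u_0$ as the convolution (on the half-line, restricted to $[0,T]$) of the scalar kernel $k$ with the $X$-valued function $v$, extended by zero outside $[0,T]$. Affine-linearity is obvious from the definition, and Bochner measurability of $t\mapsto (Kv)(t)-u_0$ is standard since $k$ is continuous and $v$ is Bochner measurable (one may check the scalar function $t\mapsto \int_0^t k(t-s)\varphi(s)\,\diff s$ for simple $\varphi$, then pass to a limit). So the real content is the two norm estimates and the continuity statement.

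For the $L^2$ bound I would start from the pointwise inequality
\begin{equation*}
  \|(Kv)(t)-u_0\|_X \le \int_0^t k(t-s)\,\|v(s)\|_X\,\diff s,
\end{equation*}
valid a.e.\ in $(0,T)$, and then apply Young's convolution inequality (or equivalently Minkowski's integral inequality) to the two scalar functions $k$ and $\|v(\cdot)\|_X$, both extended by zero to $\mathbb{R}$. This yields $\|Kv-u_0\|_{L^2(0,T;X)}\le \|k\|_{L^1(0,T)}\|v\|_{L^2(0,T;X)}$, and a direct computation gives $\|k\|_{L^1(0,T)}=\int_0^T\lambda e^{-\lambda z}\,\diff z=1-e^{-\lambda T}$. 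The $\mathscr{C}([0,T];X)$ estimate is even simpler: using $0<k(z)\le\lambda$ on $[0,T]$ in the same pointwise inequality gives $\|(Kv)(t)-u_0\|_X\le \lambda\int_0^t\|v(s)\|_X\,\diff s\le\lambda\|v\|_{L^1(0,T;X)}$.

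For the continuity/absolute continuity claim I would exploit the special exponential form of the kernel. Writing
\begin{equation*}
  (Kv)(t)-u_0 \;=\; \lambda e^{-\lambda t}\int_0^t e^{\lambda s}v(s)\,\diff s,
\end{equation*}
the factor $t\mapsto \lambda e^{-\lambda t}$ is in $\mathscr{C}^1([0,T])$, and the map $w\colon t\mapsto \int_0^t e^{\lambda s}v(s)\,\diff s$ is an indefinite Bochner integral of an $L^1(0,T;X)$ function (since $e^{\lambda s}$ is bounded on $[0,T]$), hence $w\in\mathscr{AC}([0,T];X)\subset W^{1,1}(0,T;X)$. The product of a $\mathscr{C}^1$ scalar function and an $\mathscr{AC}$ $X$-valued function is again $\mathscr{AC}$, so $Kv-u_0\in\mathscr{AC}([0,T];X)$, which is contained in $\mathscr{C}([0,T];X)$.

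There is no real obstacle here; the only point to watch is that the Bochner-integral manipulations (pulling scalar factors out of the integral, applying Young/Minkowski to the scalar majorant $\|v(\cdot)\|_X$) are fully justified because $v$ is Bochner measurable with values in a Banach space, so its norm is a nonnegative measurable scalar function and the inequality $\|\int\cdot\|_X\le\int\|\cdot\|_X$ is the defining property of the Bochner integral.
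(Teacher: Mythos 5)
Your proof is correct and is exactly the kind of elementary argument the paper has in mind (it omits the proof as being ``based on simple calculations''): the pointwise bound $\|(Kv)(t)-u_0\|_X\le\int_0^t k(t-s)\|v(s)\|_X\,\diff s$ followed by Young's convolution inequality for the $L^2$ estimate, the bound $k\le\lambda$ for the sup-norm estimate, and the factorisation $\lambda e^{-\lambda t}\int_0^t e^{\lambda s}v(s)\,\diff s$ for absolute continuity. Nothing is missing.
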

The proof is based on simple calculations, therefore we omit it here. Following this lemma, we obtain the following properties of the operator $BK$.

\begin{cor}
  Let the assumptions of Lemma \ref{estimateK} (with $X=V_B$) and assumption \textbf{(B)} be fulfilled. Then the operator $BK\colon L^2(0,T;V_B)\to L^2(0,T;V_B^*)$ is well-defined, affine-linear, and bounded. The same holds for $BK\colon L^1(0,T;V_B)\to \mathscr{C}([0,T];V_B^*)$.
\end{cor}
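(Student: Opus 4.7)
The plan is to obtain the corollary as a direct composition of the mapping properties of $K$ (from Lemma \ref{estimateK}) with the mapping properties of the extended operator $B$ described in the paragraph just before Lemma \ref{estimateK}. Since $B \colon V_B \to V_B^*$ is linear and bounded with $\|Bv\|_{V_B^*} \le \beta_B \|v\|_{V_B}$ by assumption \textbf{(B)}, its pointwise extension is linear and bounded from $L^r(0,T;V_B)$ into $L^r(0,T;V_B^*)$ for every $1 \le r \le \infty$, as well as from $\mathscr{C}([0,T];V_B)$ into $\mathscr{C}([0,T];V_B^*)$ (the latter because $B$ is even continuous as a map $V_B \to V_B^*$, so it preserves continuity in time).

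For the first claim, I would take $v \in L^2(0,T;V_B)$ and observe that $Kv \in L^2(0,T;V_B)$ by Lemma \ref{estimateK}, so that $BKv \in L^2(0,T;V_B^*)$ is well-defined. Affine-linearity of $BK$ follows from the affine-linearity of $K$ and the linearity of $B$. Boundedness is the chain of estimates
\begin{equation*}
  \|BKv\|_{L^2(0,T;V_B^*)} \le \beta_B \|Kv\|_{L^2(0,T;V_B)} \le \beta_B \bigl( T^{1/2}\|u_0\|_{V_B} + (1-e^{-\lambda T})\|v\|_{L^2(0,T;V_B)} \bigr),
\end{equation*}
where I used Lemma \ref{estimateK} together with the triangle inequality $\|Kv\|_{L^2(0,T;V_B)} \le \|u_0\|_{L^2(0,T;V_B)} + \|Kv-u_0\|_{L^2(0,T;V_B)}$.

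For the second claim, I would take $v \in L^1(0,T;V_B)$; then $Kv \in \mathscr{C}([0,T];V_B)$ (in fact in $\mathscr{AC}([0,T];V_B)$) by Lemma \ref{estimateK}, and since $B\colon V_B\to V_B^*$ is continuous and linear, the composition $BKv$ lies in $\mathscr{C}([0,T];V_B^*)$. Affine-linearity is again immediate, and boundedness follows from
\begin{equation*}
  \|BKv\|_{\mathscr{C}([0,T];V_B^*)} \le \beta_B \|Kv\|_{\mathscr{C}([0,T];V_B)} \le \beta_B \bigl( \|u_0\|_{V_B} + \lambda\,\|v\|_{L^1(0,T;V_B)} \bigr).
\end{equation*}

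There is no real obstacle here; the proof is essentially bookkeeping. The only small point to be careful about is that Lemma \ref{estimateK} is formulated for $Kv - u_0$, so one must add back the constant $u_0$ (viewed as a constant function in time) when estimating $Kv$ itself, which is routine and accounts for the extra $u_0$-term in each bound.
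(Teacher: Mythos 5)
Your proof is correct and is exactly the argument the paper has in mind: the corollary is stated without proof as an immediate consequence of composing the mapping properties of $K$ from Lemma \ref{estimateK} with the pointwise extension of the bounded linear operator $B$. Your bookkeeping, including adding back the constant $u_0$ when passing from $Kv-u_0$ to $Kv$, is accurate.
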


One crucial relation in this setting, resulting from the exponential kernel, is the following one. Let $X$ be an arbitrary Banach space and $v\in L^1(0,T;X)$. Then we have
\begin{equation} \label{RelationK}
  (Kv)'(t)=\lambda \left( v(t) - \left((Kv)(t) - u_0\right) \right)
\end{equation}
for almost all $t\in (0,T)$.

Concerning the operator $F$, we need a measurability result in order to be able to extract measurable selections of the multivalued mapping $t\mapsto F(t,u(t))$, where $u$ itself is a measurable function.

\begin{lemma} \label{MeasurabilityNemytskii}
  Let $X$ be a separable Banach space, let $F\colon [0,T]\times X\to P_f(X)$ be measurable and let $v\colon [0,T]\to X$ be Bochner measurable. Then the mapping $\tilde{F}_v\colon [0,T]\to P_f(X)$, $t\mapsto F(t,v(t))$, is measurable.
\end{lemma}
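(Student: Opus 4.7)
The plan is to reduce the statement to a composition argument on the product space $[0,T]\times X$. Measurability of $F$ in the open-preimage sense of Section~\ref{sec_notation} means that
\begin{equation*}
F^{-1}(U)=\{(t,x)\in[0,T]\times X:F(t,x)\cap U\ne\emptyset\}
\end{equation*}
lies in the natural product $\sigma$-algebra $\mathcal{L}([0,T])\otimes\mathcal{B}(X)$ for every open $U\subset X$. If the graph map $\phi\colon[0,T]\to[0,T]\times X$, $\phi(t):=(t,v(t))$, is measurable from $([0,T],\mathcal{L}([0,T]))$ into $([0,T]\times X,\mathcal{L}([0,T])\otimes\mathcal{B}(X))$, then for every open $U\subset X$
\begin{equation*}
\tilde{F}_v^{-1}(U)=\{t\in[0,T]:(t,v(t))\in F^{-1}(U)\}=\phi^{-1}\bigl(F^{-1}(U)\bigr)\in\mathcal{L}([0,T]),
\end{equation*}
which is exactly the measurability claimed for $\tilde{F}_v$.

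The measurability of $\phi$ would be checked on generators of $\mathcal{L}([0,T])\otimes\mathcal{B}(X)$: for any measurable rectangle $A\times B$ with $A\in\mathcal{L}([0,T])$ and $B\in\mathcal{B}(X)$, one has $\phi^{-1}(A\times B)=A\cap v^{-1}(B)\in\mathcal{L}([0,T])$, using that Bochner measurability of $v$ into the separable Banach space $X$ coincides with $\mathcal{L}$-$\mathcal{B}(X)$-measurability, as recorded in the footnote following the definition of $\mathcal{F}^p$. Since measurable rectangles generate the product $\sigma$-algebra, the preimage of any set in $\mathcal{L}([0,T])\otimes\mathcal{B}(X)$ under $\phi$ is Lebesgue measurable, completing the composition argument.

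The main obstacle is pinning down the correct reading of ``$F$ is measurable'': the ambient $(\Omega,\Sigma)$ from the abstract definition in Section~\ref{sec_notation} has to be identified with $([0,T]\times X,\mathcal{L}([0,T])\otimes\mathcal{B}(X))$. This is the standard interpretation for a complete $\sigma$-finite measure space times a separable Banach (hence Polish) space, and the equivalent characterisations collected in Denkowski, Mig\'orski, and Papageorgiou~\cite{DenMigPapa} make this identification rigorous. A fallback strategy would be to approximate $v$ pointwise by simple functions $v_n$, for which $t\mapsto F(t,v_n(t))$ is trivially measurable via the finite partition of $[0,T]$ induced by $v_n$; however, passing to the limit in $n$ would require upper semicontinuity of $F$ in its second argument, which is not among the hypotheses, so the product-$\sigma$-algebra route is the clean one.
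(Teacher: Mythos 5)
Your proof is correct, and it takes a genuinely different route from the paper's. The paper writes $\tilde{F}_v^{-1}(U)$ as the projection onto $[0,T]$ of the set $\{(t,x): F(t,x)\cap U\neq\emptyset\}\cap\graph(v)$ and then invokes the measurable projection theorem (Castaing and Valadier~\cite[Theorem III.23]{CastaingValadier}) together with the fact that $\graph(v)\in\mathcal{L}([0,T])\otimes\mathcal{B}(X)$. You instead compose: you observe that $\tilde{F}_v^{-1}(U)=\phi^{-1}(F^{-1}(U))$ for the graph map $\phi(t)=(t,v(t))$, and verify measurability of $\phi$ on the generating rectangles, where $\phi^{-1}(A\times B)=A\cap v^{-1}(B)$. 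The two arguments are dual views of the same identity, but the machinery invoked is quite different: the projection theorem is a nontrivial result resting on the theory of Suslin sets and the completeness of the Lebesgue $\sigma$-algebra, whereas your composition argument needs only the elementary lemma that measurability of a map can be checked on a generating family of the target $\sigma$-algebra. Your route is therefore more elementary and self-contained; the paper's route has the advantage of generalising directly to situations where one only knows that the relevant set is measurable as a subset of the product (e.g.\ when one wants to project sets that are not preimages under a nice map). Both proofs rest on the same reading of ``$F$ is measurable,'' namely $F^{-1}(U)\in\mathcal{L}([0,T])\otimes\mathcal{B}(X)$ for open $U$, which you correctly identify and which matches the paper's own usage, and both use the equivalence of Bochner and $\mathcal{L}$--$\mathcal{B}(X)$ measurability for the separable target $X$. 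Your closing remark about the simple-function fallback is also accurate: without upper semicontinuity in the second argument that approximation cannot be pushed to the limit, so discarding it was the right call.
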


\begin{proof}
  Let $U\subset X$ be open. Consider
  \begin{equation*}
    \begin{split}
      \tilde{F}^{-1}_v(U)&= \left\lbrace t\in [0,T] \mid F(t,v(t))\cap U\neq \emptyset \right\rbrace \\
      &= \pi_{[0,T]}\left( \left\lbrace (t,x)\in [0,T]\times X \mid F(t,x)\cap U \neq \emptyset , x=v(t)\right\rbrace \right) \\
      &= \pi_{[0,T]}\left( \left\lbrace (t,x)\in [0,T]\times X \mid F(t,x)\cap U \neq \emptyset \right\rbrace \cap \mathrm{graph}(v) \right)\!,
    \end{split}
  \end{equation*}
  where $\pi_{[0,T]}$ denotes the projection onto $[0,T]$. Since $v$ is Bochner measurable, its graph belongs to $\mathcal{L}([0,T])\otimes \mathcal{B}(X)$, see, e.g., Castaing and Valadier~\cite[Theorem III.36]{CastaingValadier}. Note again that for a separable Banach space $X$, Bochner measurability and $\mathcal{L}([0,T])$-$\mathcal{B}(X)$-measurability are equivalent, see, e.g., Denkowski, Mig\'orski, and Papageorgiou~\cite[Corollary~3.10.5]{DenMigPapa}. Due to the measurability of $F$, the intersection space in the equation above also belongs to $\mathcal{L}([0,T])\otimes \mathcal{B}(X)$. Since the projection maps measurable sets into measurable sets (at least in this setting, see, e.g., Castaing and Valadier~\cite[Theorem III.23]{CastaingValadier}), we have $\tilde{F}^{-1}_v(U)\in \mathcal{L}([0,T])$, which finishes the proof.\qed
\end{proof}

\section{Existence of a solution}\label{existence}

\begin{theorem}
  Let the assumptions \textbf{(A)}, \textbf{(B)}, and \textbf{(F)} be fulfilled and let $u_0\in V_B$, $v_0\in H$ be given. Then there exists a solution $v\in L^p(0,T;V_A)\cap \mathscr{C}([0,T];H)$ to problem~\eqref{problem} with $Kv\in \mathscr{C}([0,T];V_B)$ and $v'\in L^q(0,T;V_A^*)+L^\infty(0,T;V_B^*)$, i.e., the initial condition is fulfilled in $H$ and there exists $f\in \mathcal{F}^1(v)$ such that the equation
  \begin{equation*}
    v'+Av+BKv=f
  \end{equation*}
  holds in the sense of $L^q(0,T;V_A^*)$.
\end{theorem}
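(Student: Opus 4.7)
The plan is to recast problem~\eqref{problem} as a set-valued fixed-point problem on a weakly compact subset of $L^q(0,T;H)$ and to apply a Kakutani--Ky~Fan--Glicksberg type theorem in the weak topology. For the single-valued equation
\begin{equation*}
  v'+Av+BKv=f,\qquad v(0)=v_0,
\end{equation*}
the existence and uniqueness result of~\cite{EikEmmKre} (whose hypotheses on $A$, $B$, and the scale~\eqref{embed} are met, and for which any $f\in L^q(0,T;H)\hookrightarrow L^q(0,T;V_A^*)$ is admissible) produces a unique solution $v=S(f)$ with the regularity stated in the theorem. I would then define
\[
\Phi\colon L^q(0,T;H)\to 2^{L^q(0,T;H)},\qquad \Phi(f):=\mathcal{F}^q(S(f)),
\]
so that a fixed point $f\in\Phi(f)$ immediately yields a solution $v=S(f)$ of the inclusion.

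An a priori estimate comes from testing $v'+Av+BKv=f$ with $v$: using the $p$-coercivity of $A$, the bounds on $BK$ from Lemma~\ref{estimateK} together with~\eqref{RelationK} (or equivalently the coupled reformulation~\eqref{coupled}) to control the memory term, and Young's inequality, one obtains bounds on $\|v\|_{L^\infty(0,T;H)}$ and $\|v\|_{L^p(0,T;V_A)}$ in terms of $\|f\|_{L^q(0,T;H)}$ and the data. Combined with the growth condition \textbf{(F)(iii)} --- whose exponent $2/q$ is precisely tuned to the $L^\infty(H)$ estimate --- this shows that for $R>0$ sufficiently large the closed, convex, weakly compact ball $W:=\{f\in L^q(0,T;H):\|f\|_{L^q(0,T;H)}\le R\}$ is mapped into itself by $\Phi$.

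Next I would verify the remaining hypotheses of the Kakutani--Ky~Fan--Glicksberg theorem. Non-emptiness of $\Phi(f)$ follows from Lemma~\ref{MeasurabilityNemytskii} applied to $t\mapsto F(t,S(f)(t))$ and a measurable selection theorem, the selection lying in $L^q(0,T;H)$ by \textbf{(F)(iii)}. Convexity comes from convexity of the values of $F$, and strong (hence, by convexity, also weak) closedness of $\Phi(f)$ is routine. The crux is the weakly sequentially closed graph: for $f_n\wconv f$ and $g_n\wconv g$ in $L^q(0,T;H)$ with $g_n\in\Phi(f_n)$, I must show $g\in\Phi(f)$. The uniform a priori bounds yield $v_n:=S(f_n)\wconv v$ in $L^p(0,T;V_A)$, and an Aubin--Lions argument using the compact embedding $V_A\hookrightarrow\hookrightarrow H$ together with the bound on $v_n'$ in $L^q(0,T;V_A^*)+L^\infty(0,T;V_B^*)$ upgrades this, along a subsequence, to strong convergence in $L^p(0,T;H)$ and pointwise a.e.\ in $H$. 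The memory term $BKv_n$ passes to the limit by continuity of $BK$, and the monotone nonlinearity $Av_n$ is identified via the Minty trick (monotonicity, hemicontinuity, and a $\limsup$ inequality extracted from the equation itself); thus $v=S(f)$. To identify $g$ as a selection of $F(\cdot,v(\cdot))$, I would apply Mazur's lemma to extract strongly convergent convex combinations of $g_n$, pass to a subsequence so that the combinations converge pointwise a.e.\ in $H$, and invoke the sequential closedness of the graph of $v\mapsto F(t,v)$ in $H\times H_w$ together with the convexity of $F(t,v)$ to conclude $g(t)\in F(t,v(t))$ a.e. The Kakutani--Ky~Fan--Glicksberg theorem on the weakly compact convex set $W$ then delivers a fixed point $f^*\in\Phi(f^*)$, so that $v^*=S(f^*)$ solves~\eqref{problem}.

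The main obstacle I anticipate is the joint limit passage in the previous paragraph: the Minty argument for the nonlinear $A$ must coexist with the pointwise closedness argument for the multivalued $F$, and both rely on the strong convergence $v_n\to v$ in $L^2(0,T;H)$, which in turn depends on the new hypothesis --- compactness of $V_A\hookrightarrow H$ --- beyond what was used in~\cite{EikEmmKre}. Matching the growth exponent $2/q$ in \textbf{(F)(iii)} to the $L^\infty(H)$ a priori bound is likewise essential for closing the self-mapping argument for $\Phi$ on $W$.
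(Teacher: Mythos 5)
Your overall strategy (reduce to the single-valued solution operator from \cite{EikEmmKre}, then apply the Kakutani--Fan--Glicksberg theorem in the weak topology of $L^q(0,T;H)$, with Minty's trick plus the compact embedding $V_A\subset H$ for the limit passage) is the same as the paper's. But there is a genuine gap at the very first structural step: the claim that for $R$ large enough the ball $W=\{f:\|f\|_{L^q(0,T;H)}\le R\}$ is mapped into itself by $\Phi$. Chasing the constants, the energy estimate gives $\|S(f)\|^2_{L^\infty(0,T;H)}\le C_0+C_1\|f\|^q_{L^q(0,T;H)}$, and then for any $g\in\mathcal{F}^q(S(f))$ assumption \textbf{(F)}(iii) yields
\begin{equation*}
  \|g\|^q_{L^q(0,T;H)}\;\le\; 2^{q-1}\|a\|^q_{L^q(0,T)}+2^{q-1}b^qT\,\|S(f)\|^2_{L^\infty(0,T;H)}\;\le\; K_1+K_2\,\|f\|^q_{L^q(0,T;H)},
\end{equation*}
with $K_2=2^{q-1}b^qTC_1$. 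The exponent $2/q$ is critical precisely in the sense that this recursion is affine in $\|f\|^q_{L^q}$, so an invariant ball exists only if $K_2<1$ --- a smallness condition on $b$, $T$, and the coercivity constant that is nowhere assumed. (The Gronwall argument that produces the a priori bound $M_1$ only works for genuine solutions of the inclusion, where $f$ is a selection of $F(\cdot,v(\cdot))$ for the \emph{same} $v$; it does not transfer to the open-loop map $f\mapsto\mathcal{F}^q(S(f))$ on a ball.) The paper avoids this by first deriving the closed-loop bound $\|v(t)\|_H\le M_1$ via Gronwall, then replacing $F$ by the radial truncation $\hat F(t,w)=F(t,r_{M_1}(w))$, which is uniformly integrably bounded by $\hat a=a+bM_1^{2/q}$; the set $E=\{f:\vert f(t)\vert\le\hat a(t)\ \text{a.e.}\}$ is then \emph{trivially} invariant, convex, and weakly compact, and a posteriori the truncation is shown to be inactive on the fixed point because the same Gronwall estimate applies to the truncated inclusion. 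You need this (or an equivalent) truncation step; without it the fixed-point set-up does not close.

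Two smaller points. First, your Mazur-lemma step is stated too loosely: convex combinations of the $g_n$ live in convex hulls of $\bigcup_n F(t,v_n(t))$, not in any single $F(t,v_n(t))$, so you must argue via $g(t)\in\bigcap_N\overline{\mathrm{conv}}\bigcup_{n\ge N}F(t,v_n(t))$ and then use the sequential closedness of the graph in $H\times H_w$ together with convexity and the uniform bound (this is the standard convergence theorem for upper semicontinuous convex-valued maps, and is essentially what the reference to Papageorgiou supplies). Second, the energy identity obtained by testing with $v$ is itself delicate here: since neither $v'$ nor $BKv$ separately lies in $L^q(0,T;V_A^*)$, one cannot integrate by parts term by term; the combined integration-by-parts formula of \cite[Lemma 4.3]{EikEmmKre} (which produces the $\|(Kv)(t)\|_B^2$ and $\|Kv\|^2_{L^2(0,T;B)}$ terms via \eqref{RelationK}) is needed both for the a priori estimate and for the $\limsup$ inequality in the Minty argument. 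Your sketch of the limit passage is otherwise consistent with the paper's.
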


\begin{proof}
Following the proof of \cite[Theorem 3.1]{Papageorgiou91}, we want to apply the Kakutani fixed-point theorem, generalised by Glicksberg~\cite{Glicksberg} and Fan~\cite{Fan} to infinite-dimensional locally convex topological vector spaces.

First, we need a priori estimates for the solution. Assume $v\in L^p(0,T;V_A)\cap \mathscr{C}([0,T];H)$ solves problem~\eqref{problem} with the regularity stated in the theorem. Due to Lemma~\ref{MeasurabilityNemytskii}, there exists a measurable selection $f\colon [0,T]\to H$ of the mapping $t\mapsto F(t, v(t))$. The growth condition of $F$ implies
\begin{equation*}
  \begin{split}
    \Vert f(t)\Vert_H\leq a(t)+b\Vert v(t)\Vert_H^{2/q}
  \end{split}
\end{equation*}
for almost all $t\in (0,T)$, and since $a\in L^q(0,T)$ and $v\in \mathscr{C}([0,T];H)$, we have $f\in L^q(0,T;H)$. Now, test the equation
\begin{equation*}
  v' + Av +BKv = f
\end{equation*}
with $v$ and integrate the resulting equation over $(0,t)$, $t\in[0,T]$, which yields
\begin{equation} \label{testedeq}
  \int_0^t\langle v'(s)+(BKv)(s),v(s)\rangle \diff s + \int_0^t \langle Av(s),v(s)\rangle \diff s =\int_0^t \langle f(s),v(s)\rangle \diff s.
\end{equation}
Since we neither know $v'\in L^q(0,T;V_A^*)$ nor $BKv\in L^q(0,T;V_A^*)$, it is not possible to do integration by parts for each term separately. However, \cite[Lemma 4.3]{EikEmmKre} yields
\begin{equation}\label{PartInt}
  \begin{aligned}
    &\int^t_0 \langle v'(s) + (BKv)(s), v(s)\rangle \diff s = \\
    &\frac{1}{2}\|v(t)\|^2_H - \frac{1}{2}\|v_0\|^2_H + \frac{1}{2\lambda}\|(Kv)(t)\|^2_{B} -\frac{1}{2\lambda}\|u_0\|^2_{B}  - \int_0^t \langle (BKv)(s), u_0\rangle \diff s + \int_0^t \|(Kv)(s)\|^2_{B}\diff s.
  \end{aligned}
\end{equation}
Due to the coercivity of $A$ and Young's inequality, we have
\begin{equation*}
  \begin{split}
    &\frac{1}{2}\Vert v(t)\Vert^2_H +\mu_A \int_0^t \Vert v(s)\Vert^p_{V_A} \diff s + \frac{1}{2\lambda}\|(Kv)(t)\|^2_{B}+ \int_0^t \|(Kv)(s)\|^2_{B}\diff s \\
    &\leq c_A\;\!T+ \frac{1}{2} \Vert v_0\Vert^2_H + \frac{1}{2\lambda}\Vert u_0\Vert_{B}^2 + \int_0^t \Vert f(s)\Vert_{V_A^*} \Vert v(s)\Vert_{V_A} \diff s + \int_0^t  \Vert (Kv)(s)\Vert_{B} \Vert u_0\Vert_{B} \diff s \\
    &\leq c_A\;\!T + \frac{1}{2} \Vert v_0\Vert^2_H + \frac{1}{2\lambda}\Vert u_0\Vert_{B}^2 + c\int_0^t \Vert f(s)\Vert_{V_A^*}^q\diff s + \frac{\mu_A}{2} \int_0^t \Vert v(s)\Vert^p_{V_A} \diff s \\
    &\hspace{7cm}+ \frac{1}{2} \int_0^t \Vert (Kv)(s)\Vert_{B}^2 \diff s +\frac{T}{2} \Vert u_0\Vert^2_B.
  \end{split}
\end{equation*}
After rearranging, the estimate on $F$ yields
\begin{equation*}
  \begin{split}
    &\frac{1}{2}\Vert v(t)\Vert^2_H + \frac{\mu_A}{2} \int_0^t \Vert v(s)\Vert^p_{V_A} \diff s +\frac{1}{2\lambda}\|(Kv)(t)\|^2_{B}+ \frac{1}{2} \int_0^t \Vert (Kv)(s)\Vert_{B}^2 \diff s \\
    &\leq c\left( 1+ \Vert v_0\Vert_H^2 + \Vert u_0 \Vert_B^2 + \int_0^t \Vert f(s)\Vert_{V_A^*}^q\diff s \right) \\
    &\leq c\left( 1+ \Vert v_0\Vert_H^2 + \Vert u_0 \Vert_B^2 + \int_0^t \left( a(s) + b \Vert v(s)\Vert_{H}^{2/q}\right)^q\diff s \right) \\
    &\leq c\left( 1+ \Vert v_0\Vert_H^2 + \Vert u_0 \Vert_B^2 + \Vert a\Vert_{L^q(0,T)}^q + \int_0^t\Vert v(s)\Vert_{H}^{2}\diff s \right).
  \end{split}
\end{equation*}
Applying Gronwall's lemma, we obtain
\begin{equation} \label{a priori 1}
  \Vert v(t)\Vert^2_H \leq M_1
\end{equation}
for all $t\in [0,T]$, where $M_1>0$ depends on the problem data. This also immediately yields
\begin{equation} \label{a priori 2}
  \int_0^t\Vert v(s)\vert^2_{V_A} \diff s \leq M_2
\end{equation}
as well as 
\begin{equation} \label{a priori 3}
  \|(Kv)(t)\|^2_{B} \leq M_2
\end{equation}
for all $t\in [0,T]$, where $M_2>0$  also depends on the problem data.
  
We also need a priori estimates for the derivative $v'$. Due to the estimate \eqref{estimateA} and the assumptions \textbf{(F)}, we have
\begin{equation} \label{a priori 4 prep} 
  \begin{split}
    &\Vert v'\Vert_{L^q(0,T;V_A^*)+ L^\infty(0,T;V_B^*)} \\
    & \leq \max\left( \Vert Av\Vert_{L^q(0,T;V_A^*)}+ \Vert f\Vert_{L^q(0,T;V_A^*)} ,\ \Vert BKv\Vert_{L^\infty(0,T;V_B^*)} \right) \\
    & \leq \max\left( c \left(1+ \Vert v\Vert^{p-1}_{L^p(0,T;V_A)}\right) + \Vert a\Vert_{L^q(0,T)} + b\;\!\Vert v\Vert_{L^2(0,T;H)}^{2/q},\ \beta_B \;\! \Vert Kv\Vert_{L^\infty(0,T;V_B)} \right).
  \end{split}
\end{equation}
The a priori estimates \eqref{a priori 1}, \eqref{a priori 2}, and \eqref{a priori 3} above yield
\begin{equation} \label{a priori 4}
  \Vert v'\Vert_{L^q(0,T;V_A^*)+ L^\infty(0,T;V_B^*)} \leq M_3,
\end{equation}
where $M_3$ depends on the same parameters as $M_1$ and $M_2$ as well as on $\beta_B$.
  
Next, we define the truncation $\hat{F}$ of $F$ via
\begin{equation*}
  \hat{F}(t,w)=\begin{cases} F(t,w) & \text{if}\ \vert w \vert \leq M_1, \\ F(t,\frac{M_1}{\vert w \vert} w) & \text{if}\ \vert w\vert > M_1. \end{cases}
\end{equation*}
This set-valued function $\hat{F}$ has the same measurability and continuity properties as $F$: In order to prove the measurability, consider an arbitrary open subset $U\subset H$ and
\begin{equation*}
  \begin{split}
    \hat{F}^{-1}(U) &= \left\{ (t,v)\in [0,T]\times H \mid F(t, r_{M_1}(v))\cap U \neq \emptyset \right\} \\
    & = \left\{ (t,v)\in [0,T]\times H \mid F(t, v)\cap U \neq \emptyset \right\} \cap \left( [0,T] \times B_{M_1}^H \right),\\
  \end{split}
\end{equation*}
where $r_{M_1}$ is the radial retraction in $H$ to the ball $B_{M_1}^H$ of radius $M_1$. Due to the measurability of $F$, the first set is an element of $\mathcal{L}([0,T])\otimes \mathcal{B}(H)$, and since the second set is obviously an element of the same $\sigma$-algebra, $\hat{F}$ is measurable.

For proving that $\hat{F}$ fulfils the same continuity condition as $F$, let $N\subset [0,T]$ be the set of Lebesgue-measure $0$ such that the graph of $v\mapsto F(t,v)$ is sequentially closed in $H\times H_w$ for all $t\in [0,T]\setminus N$. Now, for arbitrary $t\in[0,T]\setminus N$, consider a sequence $\{(v_n,w_n)\}\subset \mathrm{graph}(\hat{F}(t,\cdot))$ with $v_n\to v$ and $w_n\wconv w$ for some $v,w\in H$. We have to show $w\in \hat{F}(t,v)$. We start with the case $\vert v\vert < M_1$. For $n$ large enough, we have $\vert v_n\vert<M_1$ and therefore $w_n\in F(t,v_n)$. The continuity property of $F$ yields $w\in F(t,v)=\hat{F}(t,v)$. Now, consider the case $\vert v\vert>M_1$. For $n$ large enough, we again have $\vert v_n\vert>M_1$. Defining $\tilde{v}_n:=\frac{M_1}{\vert v_n\vert}v_n$, we have $\tilde{v}_n\to \tilde{v}:=\frac{M_1}{\vert v\vert}v$ in $H$ and $w_n\in F(t,\tilde{v}_n)$. The continuity property of $F$ again yields $w\in F(t,\tilde{v})=\hat{F}(t,v)$. Finally, consider the case $\vert v\vert=M_1$. We know that there exists a subsequence $n'$ such that either $\vert v_{n'}\vert\leq M_1$ or $\vert v_{n'}\vert> M_1$ for all $n'$. Since we still have $w_{n'}\wconv w$, the same arguments as in the first or second case yield $w\in \hat{F}(t,v)$.

Due to the estimate on $F$ in the assumptions (\textbf{F}), we have
\begin{equation*}
  \vert \hat{F}(t,v)\vert \leq \hat{a}(t):=a(t) +b M_1^{2/q }
\end{equation*}
for almost all $t\in(0,T)$ and all $v\in H$. Now, set
\begin{equation*}
  E:= \{f\in L^q(0,T;H)\mid \vert f(t)\vert\leq \hat{a}(t) \ \text{a.e.}\}.
\end{equation*}  
We define the solution operator 
\begin{equation*}
  G\colon E\to \overline{W}(0,T):=\left\{ v\in L^p(0,T;V_A) \mid v'\in L^q(0,T;V_A^*)+L^\infty(0,T;V_B^*) \right\}
\end{equation*}
with $G(f)=v$, where $v$ is the unique solution to the problem
\begin{equation} \label{problem_f}
  \begin{split}
    v'+Av+BKv&=f, \\
    v(0)&=v_0,
  \end{split}
\end{equation}
which exists due to \cite[Theorem 4.2, Corollary 5.2]{EikEmmKre}. The aim is to show that $G$ is sequentially weakly continuous.
  
We therefore consider a sequence $\{f_n\}\subset E$ and $f\in E$ with $f_n\rightharpoonup f$ in $L^q(0,T;H)$. Following the a priori estimates \eqref{a priori 1}, \eqref{a priori 2}, \eqref{a priori 3}, and \eqref{a priori 4}, the sequence $\{v_n\}$ of corresponding solutions, i.e., $v_n=G(f_n)$, the sequence $\{v_n'\}$ of derivatives, and the sequence $\{Kv_n\}$ are bounded in $L^p(0,T;V_A)\cap L^\infty(0,T;H)$, $L^q(0,T;V_A^*)+L^\infty(0,T;V_B^*)$, and $L^\infty(0,T;V_B)$, respectively. Due to the estimate \eqref{estimateA} on $A$, this implies the boundedness of the sequences $\{Av_n\}$ in $L^q(0,T;V_A^*)$, see also \eqref{a priori 4 prep}. Since $L^p(0,T;V_A)$ is a reflexive Banach space and $L^\infty(0,T;H)$, $L^q(0,T;V_A^*)$, $L^\infty(0,T;V_B)$ as well as $L^q(0,T;V_A^*)+L^\infty(0,T;V_B^*)$ are duals of separable normed spaces, there exists a subsequence (again denoted by $n$) and $v\in L^p(0,T;V_A)\cap L^\infty(0,T;H)$, $w\in L^q(0,T;V_A^*)+L^\infty(0,T;V_B^*)$, $\tilde{a}\in L^q(0,T;V_A^*)$, and $u\in L^\infty(0,T;V_B)$ such that
\begin{equation*}
  \begin{split}
    v_n &\wconv v \phantom{vau} \text{in}\ L^p(0,T;V_A), \\
    v_n &\wsconv v \phantom{vau} \text{in}\ L^\infty(0,T;H), \\
    v_n' &\wsconv \hat{v} \phantom{vau} \text{in}\ L^q(0,T;V_A^*)+L^\infty(0,T;V_B^*), \\
    Av_n &\wsconv \tilde{a} \phantom{vvu} \text{in}\ L^q(0,T;V_A^*), \\
    Kv_n &\wsconv u \phantom{vva} \text{in}\ L^\infty(0,T;V_B),
  \end{split}
\end{equation*}
as $n\to \infty$. We obviously have $\hat{v}=v'$.

In order to show that $G$ is sequentially weakly continuous, we have to pass to the limit in the equation
\begin{equation} \label{equation_n}
  v_n' + Av_n +BKv_n = f_n
\end{equation}
and show that $v$ is a solution to problem \eqref{problem_f}. First, we want to show $u=Kv$. We know that the operator $\hat{K}\colon L^2(0,T;H)\to L^2(0,T;H)$ with $\hat{K}w:=Kw-u_0$ is well-defined, linear, and bounded, see Lemma \ref{estimateK}. Thus it is weakly-weakly continuous and $v_n\wsconv v$ in $L^\infty(0,T;H)$ (and therefore $v_n\wconv v$ in $L^2(0,T;H)$) implies $Kv_n-Kv=\hat{K}v_n-\hat{K}v\wconv 0$ in $L^2(0,T;H)$. This yields $u=Kv$. Due to the linearity and boundedness of $B\colon V_B\to V_B^*$ and thus its weakly*-weakly* continuity, we also have $BKv_n\wsconv Bu=BKv$ in $L^\infty(0,T;V_B^*)$.

Next, let us show $v(0)=v_0$ and $v_n(T)\wconv v(T)$ in $H$. Due to estimate \eqref{a priori 1}, the sequence $\{v_n(T)\}$ is bounded in $H$, so there exists $v_T\in H$ such that, up to a subsequence, $v_n(T)\wconv v_T$ in $H$. Now, consider $\phi\in \mathscr{C}^1([0,T])$, $w\in V$ (recall that $V=V_A\cap V_B$). Since $v_n$ solves \eqref{equation_n} in the weak sense and $v$ solves
\begin{equation} \label{equation_a}
  v'+\tilde{a}+BKv=f
\end{equation}
in the weak sense, we have
\begin{equation*}
  \begin{split}
    \left(v_n(T),w\right) \phi(T) - \left(v_n(0),w\right)\phi(0) &= \int_0^T \langle v_n'(t), w\rangle\;\! \phi(t)\diff t + \int_0^T \langle v_n(t),w\rangle\;\! \phi'(t)\diff t \\
    & = \int_0^T \langle f_n - Av_n - BKv_n, w \rangle \;\! \phi(t) \diff t + \int_0^T \langle v_n(t),w\rangle\;\! \phi'(t)\diff t \\
    &\rightarrow \int_0^T \langle f - \tilde{a} - BKv, w \rangle \;\! \phi(t) \diff t + \int_0^T \langle v(t),w\rangle\;\! \phi'(t)\diff t \\
    &= \int_0^T \langle v', w \rangle \;\! \phi(t) \diff t + \int_0^T \langle v(t),w\rangle\;\! \phi'(t)\diff t \\
    &= \left(v(T),w\right) \phi(T) - \left(v(0),w\right)\phi(0) \phantom{\int_0^T}
  \end{split}
\end{equation*}
as $n\to \infty$. Choosing $\phi(t)=1-\frac{t}{T}$, this yields $(v_n(0),w)\to (v(0),w)$ for all $w\in V$. Due to $v_n(0)=v_0$ for all $n\in \mathbb{N}$, we have $v(0)=v_0$. Choosing $\phi(t)=\frac{t}{T}$, we get $(v_n(T),w)\to (v(T),w)$ for all $w\in V$ and therefore also $v_T=v(T)$ in $H$.

Next, let us show $(Kv_n)(T)\wconv (Kv)(T)$ in $V_B$. Estimate \eqref{a priori 3} implies the boundedness of the sequence $\{(Kv_n)(T)\}$ in $V_B$, therefore there exists $u_T\in V_B$ such that, up to a subsequence, $(Kv_n)(T)\wconv u_T$ in $V_B$. Consider again $\phi(t)=\frac{t}{T}$, $w\in V$. Due to relation \eqref{RelationK}, we have
\begin{equation*}
  \begin{split}
    ((Kv_n)(T), w)&= \int_0^T \langle (Kv_n)'(t), w\rangle \frac{t}{T} \diff t + \int_0^T \langle (Kv_n)(t),w\rangle \frac{1}{T} \diff t \\
    &=\lambda \int_0^T \langle v_n(t)-((Kv_n)(t)-u_0),w\rangle \frac{t}{T} \diff t + \int_0^T \langle (Kv_n)(t),w\rangle \frac{1}{T} \diff t \\
    &\rightarrow \lambda \int_0^T \langle v(t)-((Kv)(t)-u_0),w\rangle \frac{t}{T} \diff t + \int_0^T \langle (Kv)(t),w\rangle \frac{1}{T} \diff t \\
    &= \int_0^T \langle (Kv)'(t), w\rangle \frac{t}{T} \diff t + \int_0^T \langle (Kv)(t),w\rangle \frac{1}{T} \diff t \\
    &= ((Kv)(T), w) \phantom{\int_0^T}
  \end{split}
\end{equation*}
as $n\to \infty$. This immediately yields $u_T=(Kv)(T)$.

In order to show that $v$ is a solution to problem \eqref{problem_f}, it remains to show $\tilde{a}=Av$. Using the integration-by-parts formula \cite[Lemma 4.3]{EikEmmKre}, we obtain
\begin{equation*}
  \begin{split}
    \langle Av_n,v_n\rangle &= \langle f_n, v_n\rangle - \langle v_n'+BKv_n, v_n\rangle \\
    &= \langle f_n, v_n\rangle - \frac{1}{2}\|v_n(T)\|^2_H + \frac{1}{2}\|v_0\|^2_H - \frac{1}{2\lambda}\|(Kv_n)(T)\|^2_{B} +\frac{1}{2\lambda}\|u_0\|^2_{B} \\
    &\hspace{5cm} + \int_0^T \langle (BKv_n)(s), u_0\rangle \diff s - \Vert Kv_n\Vert^2_{L^2(0,T;B)}.
  \end{split}
\end{equation*}
Since we have $v_n\wconv v$ in $\bar{W}(0,T)$ and since the embedding $\bar{W}(0,T)\subset L^p(0,T;H)$ is compact (see, e.g., Roub\'i\v{c}ek~\cite[Lemma~7.7]{Roubicek}), there exists a subsequence, again denoted by $n$, such that $v_n\to v$ in $L^p(0,T;H)$. This yields $\langle f_n, v_n\rangle\to \langle f,v\rangle$.\footnote{Here, we need the (in comparison to the single-valued case stronger) assumptions that the embedding $V_A\subset H$ is compact and that $f(t)\in H$ in order to identify the limit of the sequence $\{\langle f_n, v_n\rangle\}$.} We also obviously have 
\begin{equation*}
  \int_0^T \langle (BKv_n)(s), u_0\rangle \diff s\to \int_0^T \langle (BKv)(s), u_0\rangle \diff s.
\end{equation*}
Due to the convergences $v_n(T)\wconv v(T)$ in $H$, $(Kv_n)(T)\wconv (Kv)(T)$ in $V_B$ as well as $Kv_n\wsconv Kv$ in $L^\infty(0,T;V_B)$ (and thus $Kv_n\wconv Kv$ in $L^2(0,T;V_B)$) and the lower semicontinuity of the norm, we obtain
\begin{equation*}
  \begin{split}
    \limsup_{n\to\infty}\;\! \langle Av_n,v_n\rangle &\leq  \langle f, v\rangle - \frac{1}{2}\|v(T)\|^2_H + \frac{1}{2}\|v_0\|^2_H - \frac{1}{2\lambda}\|(Kv)(T)\|^2_{B} +\frac{1}{2\lambda}\|u_0\|^2_{B} \\
    &\hspace{5cm} + \int_0^T \langle (BKv)(s), u_0\rangle \diff s - \Vert Kv\Vert^2_{L^2(0,T;B)} \\
    &= \langle f,v\rangle - \langle v'+BKv,v\rangle,
  \end{split}
\end{equation*}
using again the integration-by-parts formula \cite[Lemma 4.3]{EikEmmKre}. As $v$ solves \eqref{equation_a} in the weak sense, we have
\begin{equation} \label{limsup A}
  \limsup_{n\to\infty}\;\! \langle Av_n,v_n\rangle \leq \langle \tilde{a},v\rangle.
\end{equation}
Now, for arbitrary $w\in L^p(0,T;V_A)$, the monotonicity of $A$ implies
\begin{equation*}
  \begin{split}
    \langle Av_n, v_n\rangle &= \langle Av_n-Aw,v_n-w\rangle + \langle Aw, v_n-w\rangle +\langle Av_n, w\rangle \\
    &\geq \langle Aw, v_n-w \rangle + \langle Av_n, w\rangle.
  \end{split}
\end{equation*}
Therefore, we obtain
\begin{equation*}
  \liminf_{n\to\infty} \langle Av_n, v_n\rangle \geq \langle Aw, v-w\rangle + \langle \tilde{a},w\rangle
\end{equation*}
and, together with \eqref{limsup A},
\begin{equation*}
  \langle Aw-\tilde{a}, v-w\rangle\leq 0.
\end{equation*}
Choosing $w=v\pm rz$ for an arbitrary $z\in L^p(0,T;V_A)$ and $r>0$ and using the hemicontinuity as well as the growth condition of $A\colon V_A\to V_A^*$, Lebesgue's theorem on dominated convergence yields for $r\to 0$
\begin{equation*}
  \langle Av -\tilde{a} , z\rangle =0
\end{equation*}
for all $z\in L^p(0,T;V_A)$, which implies $\tilde{a}=Av$.


As the last step of this proof, consider the operator $R\colon E \to P_{fc}(E)$ with $R(f)=\mathcal{F}^1(G(f))$, where the set $\mathcal{F}^1(G(f))$ is meant with respect to the truncation $\hat{F}$ instead of $F$, i.e.,
\begin{equation*}
  \mathcal{F}^1(G(f))=\left\{ f\in L^1(0,T;H) \mid f(t)\in \hat{F}(t,(G(f))(t))\ \text{a.e. in}\ (0,T) \right\}\!.
\end{equation*}
Following the proof of \cite[Theorem 3.1]{Papageorgiou91}, this operator is upper semicontinuous on $E$ equipped with the weak topology. Thus, we can apply the generalisation of the Kakutani fixed-point theorem (see Glicksberg~\cite{Glicksberg} and Fan~\cite{Fan}) to obtain the existence of $f\in E$ such that $f\in R(f)=\mathcal{F}^1(G(f))$. This implies that $v=G(f)$ solves the problem \eqref{problem} with the right-hand side $\hat{F}$. However, due to the a priori estimate \eqref{a priori 1}, we have $\hat{F}(t,v(t))=F(t,v(t))$ for all $t\in[0,T]$ which proves the assertion.\qed

%
  
\end{proof}



\end{document}